\documentclass[a4paper,oneside,10pt]{article}%
\usepackage{amsmath}
\usepackage{amsfonts}
\usepackage{amssymb}
\usepackage{graphicx}%
\setcounter{MaxMatrixCols}{30}
\providecommand{\U}[1]{\protect\rule{.1in}{.1in}}

\pagenumbering{arabic}
\setlength{\textwidth}{145mm}
\setlength{\textheight}{220mm}
\headsep=15pt \topmargin=-5mm \oddsidemargin=0.46cm
\evensidemargin=0.46cm \raggedbottom
\newtheorem{theorem}{Theorem}[section]

\newtheorem{corollary}[theorem]{Corollary}

\newtheorem{definition}[theorem]{Definition}
\newtheorem{assumption}[theorem]{Assumption}

\newtheorem{lemma}[theorem]{Lemma}

\newtheorem{remark}[theorem]{Remark}

\newenvironment{proof}[1][Proof]{\noindent \textbf{#1.} }{\  \rule{0.5em}{0.5em}}
\numberwithin{equation}{section}

\begin{document}

\title{ The Dupire derivatives and Fr\'{e}chet derivatives on continuous pathes \thanks{This work was supported by National Natural Science Foundation of China (No. 11171187, No. 10871118 and No. 10921101); Supported by the Programme of Introducing Talents of Discipline to Universities of China (No.B12023);
Supported by Program for New Century Excellent Talents in University of China.}}
\author{Shaolin Ji\thanks{Institute for Financial Studies and Institute of
Mathematics, Shandong University, Jinan, Shandong 250100, PR China
(Jsl@sdu.edu.cn, Fax: +86 0531 88564100).}
\and Shuzhen Yang\thanks{School of mathematics, Shandong University, Jinan,
Shandong 250100, PR China. (yangsz@mail.sdu.edu.cn). }}
\date{}
\maketitle

\textbf{Abstract:} In this paper, we study the relation between Fr\'{e}chet
derivatives and Dupire derivatives, in which the latter are recently
introduced by Dupire {\normalsize \cite{Dupire.B}}. After introducing the
definition of Fr\'{e}chet derivatives {\normalsize for non-anticipative
functionals, we prove that }the Dupire derivatives and the extended
Fr\'{e}chet derivatives are coherent on continuous pathes.

{\textbf{Keywords}: }Dupire derivatives; {\normalsize Functional It\^{o}'s
calculus; }Backward stochastic differential equations; Path-dependent PDEs;
Fr\'{e}chet derivatives

\section{Introduction}

Recently Dupire {\normalsize \cite{Dupire.B} introduced the functional
It\^{o}'s calculus, which was further developed in Cont and Fourni
\cite{Cont-1}-\cite{Cont-3}. }The key idea of Dupire
{\normalsize \cite{Dupire.B} is to introduce the new "local" derivatives,
i.e., horizontal derivative and vertical derivative for non-anticipative
processes. }Inspired by Dupire's work, Peng and Wang \cite{Peng S 3} obtained
a nonlinear Feynman-Kac formula for classical solutions of path-dependent PDEs
in terms of non-Markovian Backward stochastic differential equations (BSDEs
for short). The viscosity solutions of path-dependent PDEs are also studied in
\cite{Ekren} and \cite{Peng S 2} under this new framework. All these results
show that Dupire derivative is an important tool {\normalsize to deal with
functionals of continuous semimartingales. }

The aim of this paper is to establish the relation between Dupire derivatives
and Fr\'{e}chet derivatives. Note that the Dupire derivative is a "local" one,
in the sense that it is defined by perturbing the endpoint of a given current
path. Compared with the Dupire derivative, the Fr\'{e}chet derivative is
defined by perturbing the whole path. Thus, it seems difficult to find the
relationship between them.

To overcome the above difficulty, we introduce the definition of Fr\'{e}chet
derivatives {\normalsize for non-anticipative functionals. Inspired by
Mohammed's work about }stochastic functional differential equations with
bounded memory (see \cite{Mohammed1} and \cite{Mohammed}), we study the weakly
continuous linear and bilinear extensions of the Fr\'{e}chet derivatives. By
means of an auxiliary stochastic functional system, we show that the Dupire
derivatives and the extended Fr\'{e}chet derivatives are coherent on
continuous pathes.

This paper is organized as follows. In section 2, we present some fundamental
results of the\ Duprie derivatives and define the Fr\'{e}chet derivatives
{\normalsize for non-anticipative functionals}. Furthermore, the unique
extensions of the Fr\'{e}chet derivatives are obtained. In section 3, under
mild assumptions, we prove that the Duprie derivatives and the extended
Fr\'{e}chet derivatives are equal on continuous pathes.

\section{Preliminaries}

\subsection{The Dupire derivatives}

The following notations and tools are mainly from Dupire \cite{Dupire.B}. Let
$T>0$ be fixed. For each $t\in\lbrack0,T]$, we denote by $\Lambda_{t}$ the set
of c\`{a}dl\`{a}g $\mathbb{R}^{d}$-valued functions on $[0,t]$, and $C$ is the
set of continuous functions on $[0,T]$. For each $\gamma(\cdot)\in\Lambda_{T}$
the value of $\gamma(\cdot)$ at time $s\in\lbrack0,T]$ is denoted by
$\gamma(s)$. Thus $\gamma(\cdot)=\gamma(s)_{0\leq s\leq T}$ is a
c\`{a}dl\`{a}g process on $[0,T]$ and its value at time $s$ is $\gamma(s)$.
The path of $\gamma(\cdot)$ up to time $t$ is denoted by $\gamma_{t}$, i.e.,
$\gamma_{t}=\gamma(s)_{0\leq s\leq t}\in\Lambda_{t}$. We denote $\Lambda
=\bigcup_{t\in\lbrack0,T]}\Lambda_{t}$. For each $\gamma_{t}\in\Lambda$ and
$x\in\mathbb{R}^{d}$ we denote by $\gamma_{t}(s)$ the value of $\gamma_{t}$ at
$s\in\lbrack0,t]$ and $\gamma_{t}^{x}:=(\gamma_{t}(s)_{0\leq s<t},\gamma
_{t}(t)+x)$ which is also an element in $\Lambda_{t}$.

Let $\langle\cdot,\cdot\rangle$ and $|\cdot|$ denote the inner product and
norm in $\mathbb{R}^{n}$. We now define a distance on $\Lambda$. For each
$0\leq t,\bar{t}\leq T$ and $\gamma_{t},\bar{\gamma}_{\bar{t}}\in\Lambda$, we
denote%
\[%
\begin{array}
[c]{l}%
\Vert\gamma_{t}\Vert:=\sup\limits_{s\in\lbrack0,t]}|\gamma_{t}(s)|,\\
\Vert\gamma_{t}-\bar{\gamma}_{\bar{t}}\Vert:=\sup\limits_{s\in\lbrack
0,t\vee\bar{t}]}|\gamma_{t}(s\wedge t)-\bar{\gamma}_{\bar{t}}(s\wedge\bar
{t})|,\\
d_{\infty}(\gamma_{t},\bar{\gamma}_{\bar{t}}):=\sup_{0\leq s\leq t\vee\bar{t}%
}|\gamma_{t}(s\wedge t)-\bar{\gamma}_{\bar{t}}(s\wedge\bar{t})|+|t-\bar{t}|.
\end{array}
\]
It is obvious that $\Lambda_{t}$ is a Banach space with respect to $\Vert
\cdot\Vert$ and $d_{\infty}$ is not a norm.

\begin{definition}
A function $u:\Lambda\mapsto\mathbb{R}$ is said to be $\Lambda$--continuous at
$\gamma_{t}\in\Lambda$, if for any $\varepsilon>0$ there exists $\delta>0$
such that for each $\bar{\gamma}_{\bar{t}}\in\Lambda$ with $d_{\infty}%
(\gamma_{t},\bar{\gamma}_{\bar{t}})<\delta$, we have $|u(\gamma_{t}%
)-u(\bar{\gamma}_{\bar{t}})|<\varepsilon$. $u$ is said to be $\Lambda
$--continuous if it is $\Lambda$--continuous at each $\gamma_{t}\in\Lambda$.
\end{definition}

\begin{definition}
Let $u:\Lambda\mapsto\mathbb{R}$ and $\gamma_{t}\in\Lambda$ be given. If there
exists $p\in\mathbb{R}^{d}$, such that
\[
u(\gamma_{t}^{x})=u(\gamma_{t})+\langle p,x\rangle+o(|x|)\ \text{as}%
\ x\rightarrow0,\ x\in\mathbb{R}^{d}.\ \
\]
Then we say that $u$ is (vertically) differentiable at $\gamma_{t}$ and denote
the gradient of $\tilde{D}_{x}u(\gamma_{t})=p$. $u$ is said to be vertically
differentiable in $\Lambda$ if $\tilde{D}_{x}u(\gamma_{t})$ exists for each
$\gamma_{t}\in\Lambda$. We can similarly define the Hessian $\tilde{D}%
_{xx}^{2}u(\gamma_{t})$. It is an $\mathbb{S}(d)$-valued function defined on
$\Lambda$, where $\mathbb{S}(d)$ is the space of all $d\times d$ symmetric matrices.
\end{definition}

For each $\gamma_{t}\in\Lambda$ we denote
\[
\gamma_{t,s}(r)=\gamma_{t}(r)\mathbf{1}_{[0,t)}(r)+\gamma_{t}(t)\mathbf{1}%
_{[t,s]}(r),\ \ r\in\lbrack0,s].
\]
It is clear that $\gamma_{t,s}\in\Lambda_{s}$.

\begin{definition}
For a given $\gamma_{t}\in\Lambda$ if we have
\[
u(\gamma_{t,s})=u(\gamma_{t})+a(s-t)+o(|s-t|)\ \text{as}\ s\rightarrow
t,\ s\geq t,\ \
\]
then we say that $u(\gamma_{t})$ is (horizontally) differentiable in $t$ at
$\gamma_{t}$ and denote $\tilde{D}_{t}u(\gamma_{t})=a$. $u$ is said to be
horizontally differentiable in $\Lambda$ if $\tilde{D}_{t}u(\gamma_{t})$
exists for each $\gamma_{t}\in\Lambda$.
\end{definition}

\begin{definition}
Define $\mathbb{C}^{j,k}(\Lambda)$ as the set of function $u:=(u(\gamma
_{t}))_{\gamma_{t}\in\Lambda}$ defined on $\Lambda$ which are $j$ times
horizontally and $k$ times vertically differentiable in $\Lambda$ such that
all these derivatives are $\Lambda$--continuous.
\end{definition}

The following It\^{o} formula was firstly obtained by Dupire \cite{Dupire.B}
and then generalized by Cont and Fourni\'{e}, \cite{Cont-1},\ \cite{Cont-2}
and \cite{Cont-3}.

\begin{theorem}
\label{w2 copy(1)}Let $(\Omega,\mathcal{F},(\mathcal{F}_{t})_{t\in\lbrack
0,T]},P)$ be a probability space, if $X$ is a continuous semi-martingale and
$u$ is in $\mathbb{C}^{1,2}(\Lambda)$, then for any $t\in\lbrack0,T)$,
\end{theorem}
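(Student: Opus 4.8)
The plan is to reduce the functional increment $u(X_t)-u(X_0)$ to a telescoping sum over a partition $\pi_n:0=t_0<t_1<\cdots<t_{k_n}=t$ with mesh $|\pi_n|\to0$, and then to split each elementary increment into a purely horizontal move and a purely vertical move, so that the two families of Dupire derivatives enter separately. Concretely, on each subinterval $[t_i,t_{i+1}]$ I would first extend the frozen path, passing from $X_{t_i}$ to $X_{t_i,t_{i+1}}$ (the element of $\Lambda_{t_{i+1}}$ that agrees with $X$ on $[0,t_i)$ and is held constant equal to $X(t_i)$ on $[t_i,t_{i+1}]$), and then perform the vertical bump $x_i:=X(t_{i+1})-X(t_i)$ carrying $X_{t_i,t_{i+1}}$ to $X_{t_{i+1}}$. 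Writing the elementary increment as $\big(u(X_{t_i,t_{i+1}})-u(X_{t_i})\big)+\big(u(X_{t_{i+1}})-u(X_{t_i,t_{i+1}})\big)$ is the structural heart of the argument.

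For the horizontal piece, the definition of horizontal differentiability applied along the frozen extension gives
\[
u(X_{t_i,t_{i+1}})-u(X_{t_i})=\tilde D_t u(X_{t_i})\,(t_{i+1}-t_i)+o(t_{i+1}-t_i),
\]
and summing these, using the $\Lambda$--continuity of $\tilde D_t u$ to control the $o$--terms and to identify the limit of the Riemann sums, yields $\int_0^t\tilde D_s u(X_s)\,ds$. For the vertical piece I would use a second-order Taylor expansion in the bump variable, combining the definition of vertical differentiability with the existence of the Hessian $\tilde D_{xx}^2u$:
\[
u(X_{t_{i+1}})-u(X_{t_i,t_{i+1}})=\langle\tilde D_xu(X_{t_i,t_{i+1}}),x_i\rangle+\tfrac12\langle\tilde D_{xx}^2u(X_{t_i,t_{i+1}})x_i,x_i\rangle+o(|x_i|^2).
\]
Summing the first-order terms produces a nonanticipative Riemann--Stieltjes sum against the increments $x_i$, which converges in probability to the It\^o integral $\int_0^t\langle\tilde D_xu(X_s),dX(s)\rangle$, while the second-order terms converge, via the quadratic variation of the continuous semimartingale, to $\tfrac12\int_0^t\mathrm{tr}\big[\tilde D_{xx}^2u(X_s)\,d\langle X\rangle(s)\big]$.

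The hard part will be making all of these limits rigorous simultaneously and showing the aggregated remainders vanish. I would localize by stopping times $\tau_N=\inf\{s:|X(s)|\ge N\}\wedge t$, so that the relevant stopped paths $X_s$ stay in a bounded, and by continuity of $X$ also $d_\infty$--relatively-compact, subset of $\Lambda$ on which $\tilde D_xu$ and $\tilde D_{xx}^2u$ are uniformly continuous; this uniformity upgrades the pointwise Taylor $o(|x_i|^2)$ estimates into a single error controlled by a modulus of continuity times the total quadratic variation, which tends to zero as $|\pi_n|\to0$. The most delicate accounting is the comparison of the derivatives evaluated at the frozen path $X_{t_i,t_{i+1}}$ versus at $X_{t_i}$: since $d_\infty(X_{t_i,t_{i+1}},X_{t_i})\le|t_{i+1}-t_i|$, the $\Lambda$--continuity lets me replace one by the other at the cost of a vanishing modulus, after which the first- and second-order sums become genuine Riemann--Stieltjes sums against $dX$ and $d\langle X\rangle$ and the standard convergence theorems for continuous semimartingales apply. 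Assembling the time integral, the It\^o integral, and the quadratic-variation integral then gives the asserted formula, finally removing the localization by letting $N\to\infty$.
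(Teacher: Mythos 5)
The paper itself offers no proof of this theorem (it is quoted from Dupire and Cont--Fourni\'e), so the relevant comparison is with the standard proof, whose general architecture you have correctly recalled; but your version contains a genuine gap at its structural heart. The telescoping identity fails: the vertical bump of the frozen path $X_{t_i,t_{i+1}}$ by $x_i=X(t_{i+1})-X(t_i)$ produces $(X_{t_i,t_{i+1}})^{x_i}$, which agrees with $X_{t_{i+1}}$ at the endpoint and on $[0,t_i)$, but remains frozen at $X(t_i)$ on $[t_i,t_{i+1})$, whereas $X_{t_{i+1}}$ follows the actual trajectory there. So the bump does \emph{not} carry $X_{t_i,t_{i+1}}$ to $X_{t_{i+1}}$, and the quantity $u(X_{t_{i+1}})-u(X_{t_i,t_{i+1}})$ is not a vertical increment: the second-order vertical Taylor expansion you invoke applies only to $u\big((X_{t_i,t_{i+1}})^{x_i}\big)-u(X_{t_i,t_{i+1}})$, where the two paths differ solely at the final time. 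The per-step discrepancy $u(X_{t_{i+1}})-u\big((X_{t_i,t_{i+1}})^{x_i}\big)$ is controlled only by the modulus of $\Lambda$-continuity of $u$ at a scale given by the oscillation of $X$ over $[t_i,t_{i+1}]$; each such term is $o(1)$ as the mesh shrinks, but there are of order $1/|\pi_n|$ of them, and a modulus of continuity carries no rate, so your localization and uniform-continuity-on-compacts device cannot make their sum vanish. This is exactly why the proofs of Dupire and Cont--Fourni\'e run the horizontal/vertical moves along a piecewise-constant c\`adl\`ag approximation $X^n$ of $X$, for which the two moves reconstruct the path \emph{exactly} and the telescoping is an identity, and only then let $|\pi_n|\to 0$, using $d_\infty$-continuity a single time at the endpoints (to get $u(X^n_t)\to u(X_t)$) together with dominated-convergence arguments in the three integrals. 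It is also why $\Lambda$ must contain discontinuous paths even to prove the formula for continuous semimartingales.

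A secondary, repairable issue: summing the pointwise horizontal expansions $\tilde D_tu(X_{t_i})(t_{i+1}-t_i)+o(t_{i+1}-t_i)$ requires uniformity of the $o$-terms that pointwise differentiability plus continuity of $\tilde D_tu$ does not by itself provide; the standard remedy is to write the horizontal increment exactly, namely
\[
u(X_{t_i,t_{i+1}})-u(X_{t_i})=\int_{t_i}^{t_{i+1}}\tilde D_su(X_{t_i,s})\,ds,
\]
by applying the fundamental theorem of calculus to the map $s\mapsto u(X_{t_i,s})$, whose right derivative is $\tilde D_su(X_{t_i,s})$ and is continuous. Your handling of the vertical second-order remainders (a uniform modulus on a $d_\infty$-compact family times $\sum_i|x_i|^2\to\langle X\rangle(t)$), the left-point adaptedness of the Riemann--Stieltjes sums, and the localization by $\tau_N$ are all sound once the decomposition is performed along $X^n$ rather than along $X$ itself.
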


\[%
\begin{split}
u(X_{t})-u(X_{0})  &  =\int_{0}^{t}\tilde{D}_{s}u(X_{s})\,ds+\int_{0}%
^{t}\tilde{D}_{x}u(X_{s})\,dX(s)\\
&  \text{ \ \ }+\frac{1}{2}\int_{0}^{t}\tilde{D}_{xx}^{2}u(X_{s})\,d\langle
X\rangle(s),\quad\quad\ P-a.s.
\end{split}
\]

\subsection{The Fr\'{e}chet Derivatives}

Let $C^{\ast}$ and $C^{\dagger}$ be the space of bounded linear functionals
$\Phi:C\rightarrow R$ and bounded blinear functionals $\tilde{\Phi}:C\times
C\rightarrow R$, of the space $C,$ respectively. They are equipped with the
operator norms which will be, respectively, denoted by $\Vert\cdot\Vert^{\ast
}$ and $\Vert\cdot\Vert^{\dagger}$.

Fix $t\in\lbrack0,T)$. Let $B_{t}=\{ \upsilon1_{\{t\}},\upsilon\in R^{n}\}$,
where $1_{\{t\}}:[0,T]\rightarrow R$ is defined by%

\[%
\begin{array}
[c]{c}%
1_{\{t\}}(s):=\left\{
\begin{array}
[c]{l}%
0,\text{ \ for }s\in\lbrack0,t),\\
1,\text{ \ for }s=t,\\
0,\text{ \ for }s\in(t,T].
\end{array}
\right.
\end{array}
\]

We define the direct sum%
\[%
\begin{array}
[c]{c}%
C\oplus B_{t}:=\{ \phi(\cdot)+\upsilon1_{\{t\}}\mid\phi(\cdot)\in
C,\upsilon\in R^{n}\}
\end{array}
\]

and equip it with the norm $\Vert\cdot\Vert$ defined by%

\[%
\begin{array}
[c]{c}%
\Vert\phi(\cdot)+\upsilon1_{\{t\}}\Vert=\sup_{s\in\lbrack0,T]}\mid\phi
(s)\mid+\mid\upsilon\mid,\text{ \ \ }\phi(\cdot)\in C,\upsilon\in R^{n}.
\end{array}
\]

For each $\gamma(\cdot)\in C$ we denote%

\[%
\begin{array}
[c]{c}%
\gamma_{t}(s)=\left\{
\begin{array}
[c]{l}%
\gamma(s),\text{ \ \ }s\leq t,\\
\gamma(t),\text{ \ \ }s>t.
\end{array}
\right.
\end{array}
\]

It is clear that $\gamma_{t}(\cdot)\in C$.

\begin{definition}
We call a\ functional $\Psi:[0,T]\times C\longmapsto R$ is\ non-anticipative,
if for any $t\in\lbrack0,T]$ and $x(\cdot),$ $y(\cdot)\in C$ satisfying the
condition%
\[%
\begin{array}
[c]{c}%
y(\tau)=x(\tau)\text{ \ for }\tau\in\lbrack0,t],
\end{array}
\]
there holds the equality
\[%
\begin{array}
[c]{c}%
\Psi(t,x(\cdot))=\Psi(t,y(\cdot)).
\end{array}
\]

\end{definition}

\begin{definition}
\label{defi-1}$\forall\gamma(\cdot)\in C,$ if we have
\[
\Psi(s,\gamma_{t}(\cdot))=\Psi(t,\gamma(\cdot))+a(s-t)+o(|s-t|)\ \text{as}%
\ s\rightarrow t,\ s\geq t,\ \
\]
then we say that $\Psi(t,\gamma(\cdot))$ is differentiable at $t$ and denote
$D_{t}\Psi(t,\gamma(\cdot))=a$.
\end{definition}

$\Psi$ is said to be differentiable in $[0,T)$ if $D_{t}\Psi(t,\gamma(\cdot))$
exists for each $(t,\gamma(\cdot))\in\lbrack0,T]\times C.$

\begin{definition}
For a given $\gamma(\cdot)\in C$ and a non-anticipative $\Psi$, if we have
\[
\Psi(t,\varphi(\cdot))=\Psi(t,\gamma(\cdot))+D_{x}\Psi(t,\gamma(\cdot
))((\varphi(\cdot)-\gamma(\cdot)))+o(\Vert(\varphi(\cdot)-\gamma
(\cdot))1_{[0,t]}\Vert),
\]
\ for each $\varphi(\cdot)\in C,$ then we say that $\Psi(t,\gamma(\cdot))$ is
Fr\'{e}chet differentiable at $\gamma(\cdot)$.
\end{definition}

$\Psi$ is said to be differentiable in $C$ if $D_{x}\Psi(t,\gamma(\cdot))$
exists for each $(t,\gamma(\cdot))\in\lbrack0,T)\times C.$

\begin{remark}
For a non-anticipative $\Psi$, if $\Psi(t,\gamma(\cdot))$ is Fr\'{e}chet
differentiable at $\gamma(\cdot),$ then it is obvious
\[
D_{x}\Psi(t,\gamma(\cdot))(\eta(\cdot))=D_{x}\Psi(t,\gamma(\cdot))(\eta
(\cdot)1_{[0,t]}),\text{ \ \ }\forall\eta(\cdot)\in C.
\]

\end{remark}

\begin{definition}
Define $C^{j,k}([0,T)\times C)$ as the set of non-anticipative\ functions
$\Psi$ defined on $[0,T]\times C$ which are $j$ times differentiable in time
and $k$ times Fr\'{e}chet differentiable in $C$ such that all these
derivatives are continuous.
\end{definition}

Using similar techniques as in {\normalsize Mohammed} \cite{Mohammed1} and
\cite{Mohammed}, we have the following lemma.

\begin{lemma}
\label{extension-1} Suppose a non-anticipative $\Phi:[0,T]\times C\rightarrow
R$ is second order continuous differentiable. Then $\forall\phi(\cdot)\in C$,
the Fr\'{e}chet derivatives $D_{x}\Phi(t,\phi(\cdot))$ and $D_{xx}^{2}%
\Phi(t,\phi(\cdot))$ have unique weakly continuous linear and bilinear
extensions%
\[
\overline{D_{x}\Phi(t,\phi(\cdot))}\in(C\oplus B_{t})^{\ast},\ \overline
{D_{xx}^{2}\Phi(t,\phi(\cdot))}\ \in(C\oplus B_{t})^{\dagger}.
\]

\end{lemma}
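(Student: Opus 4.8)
The plan is to show that the Fréchet derivatives, which are a priori only defined as bounded linear/bilinear functionals on the Banach space $C$ (and its enlargement $C\oplus B_t$), admit canonical extensions to functionals that are *weakly* continuous, and that these extensions are unique. The key structural fact I would invoke is a Riesz-type representation theorem: a bounded linear functional $\Phi\in C^\ast$ is, by the Riesz representation theorem for $C([0,T])$, given by integration against a finite signed measure $\mu$ on $[0,T]$, i.e. $\Phi(\eta)=\int_0^T \eta(s)\,d\mu(s)$. The plan is to use this integral representation as the *definition* of the extension: for a general bounded measurable argument (in particular an element $\upsilon 1_{\{t\}}$ of $B_t$, which is not continuous), the integral $\int_0^T \eta(s)\,d\mu(s)$ still makes sense, so $\overline{D_x\Phi}(\eta)$ is defined by the same integral formula now evaluated at discontinuous $\eta$.

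First I would write out the Riesz representation for $D_x\Phi(t,\phi(\cdot))$ and for the bilinear form $D_{xx}^2\Phi(t,\phi(\cdot))$, the latter being represented by a finite signed bimeasure (or measure on the product $[0,T]\times[0,T]$) via a vector-valued Riesz / Dunford–Schwartz type theorem. Second, I would \emph{define} the extended functionals on $C\oplus B_t$ by integrating against these (bi)measures, and check they are bounded in the norm $\|\cdot\|$ on $C\oplus B_t$; the bound is immediate since $\|\overline{D_x\Phi}\|^\ast\le |\mu|([0,T])=\|D_x\Phi\|^\ast$. Third, I would verify that this extension is \emph{weakly continuous}: if $\eta_n\to\eta$ weakly in $C\oplus B_t$ then $\int \eta_n\,d\mu\to\int\eta\,d\mu$, which follows from the representation because testing against the fixed measure $\mu$ is exactly evaluation of a fixed element of the dual. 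The bilinear case proceeds identically using the bimeasure and the appropriate notion of weak convergence in each argument.

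For uniqueness, the plan is to exploit density: I would argue that $C$ is weakly dense in $C\oplus B_t$, or more precisely that any element $\upsilon 1_{\{t\}}\in B_t$ is a weak limit of a sequence $\varphi_n(\cdot)\in C$ (for instance continuous ``tent'' functions of height $\upsilon$ and shrinking support concentrated at $t$, which converge weakly but not in norm to $\upsilon 1_{\{t\}}$). Any weakly continuous extension must agree with $D_x\Phi$ on $C$ and, by passing to the weak limit along such an approximating sequence, its value on $\upsilon 1_{\{t\}}$ is forced; hence the extension is unique. This mirrors exactly the argument in Mohammed \cite{Mohammed1}, \cite{Mohammed} for stochastic functional differential equations with bounded memory, where the same enlargement by a point mass at the current time is performed.

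The hard part will be making the weak-continuity and density claims precise in the nonseparable, non-reflexive setting of $C([0,T])^\ast$, where ``weak continuity'' must be interpreted carefully (it is really continuity with respect to a suitable topology under which the tent-function approximations converge, rather than the full weak topology of the Banach space). In particular I would need to confirm that the approximating sequences $\varphi_n$ converge in the relevant sense and that the bimeasure representing $D_{xx}^2\Phi$ has no pathological behaviour on the diagonal that would obstruct evaluation at paths with a jump at $t$. Once the correct topology is pinned down and the Riesz representation is in hand, the boundedness, continuity, and uniqueness are routine; the genuine content lies in identifying that topology and verifying the density of $C$ in $C\oplus B_t$ with respect to it.
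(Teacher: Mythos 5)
Your proposal is correct and follows essentially the same route as the paper: both rest on the Riesz representation of $D_{x}\Phi(t,\phi(\cdot))$ by a finite Borel measure $\mu$, define the extension on $C\oplus B_{t}$ by the same integral formula (equivalently, by adding the point-mass term $\upsilon\,\mu(\{t\})$), deduce weak continuity from dominated convergence, and establish uniqueness by approximating $\upsilon 1_{\{t\}}$ weakly with continuous ramp/tent functions, exactly the paper's sequence $\xi_{0}^{k}$ borrowed from Mohammed. Your closing caveat correctly pins down that ``weak continuity'' here means continuity along bounded pointwise-convergent sequences rather than the Banach-space weak topology, and your bimeasure treatment of $D_{xx}^{2}\Phi$ merely fleshes out what the paper dispatches with ``similarly.''
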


\begin{proof}
It is sufficient to consider the one-dimensional case, i.e., $n=1$.

For a fixed $t\in\lbrack0,T)$ and $\phi(\cdot)\in C$, we will show that there
is a unique weakly continuous extension $\overline{D_{x}\Phi(t,\phi(\cdot
))}\in(C\oplus B_{t})^{\ast}$ of the first Fr\'{e}chet derivatives $D_{x}%
\Phi(t,\phi(\cdot))$. In other words, if $\{ \xi^{k}\}$ is a bounded sequence
in $C$ such that $\xi^{k}(s)\rightarrow\xi(s)$ as $k\rightarrow\infty$ for all
$s\in\lbrack0,T]$ where $\xi\in C\oplus B_{t},$ then $D_{x}\Phi(t,\xi
^{k}(\cdot))\rightarrow\overline{D_{x}\Phi(t,\xi(\cdot))}$ as $k\rightarrow
\infty.$ Note that $\Phi$ is non-anticipative. Then for all $\eta\in C,$%

\[%
\begin{array}
[c]{rl}%
D_{x}\Phi(t,\phi(\cdot))(\eta(\cdot))= & D_{x}\Phi(t,\phi_{t}(\cdot
))(\eta(\cdot)1_{[0,t]}).
\end{array}
\]
\

By the Riesz representation theorem, there is a unique finite Borel measure
$\mu$ on $[0,T]$ such that
\begin{equation}%
\begin{array}
[c]{c}%
D_{x}\Phi(t,\phi(\cdot))(\eta(\cdot))=\int_{0}^{t}\eta(s)d\mu(s).
\end{array}
\label{Riese-1}%
\end{equation}
Define $\overline{D_{x}\Phi(t,\phi(\cdot))}\in(C\oplus B_{t})^{\ast}$ by
\[%
\begin{array}
[c]{c}%
\overline{D_{x}\Phi(t,\phi(\cdot)+\upsilon1_{\{t\}})}=D_{x}\Phi(t,\phi
(\cdot))+\upsilon\mu(t),\text{ \ \ }\eta\in C,\text{ }\upsilon\in R.
\end{array}
\]
We know that $\overline{D_{x}\Phi(t,\phi(\cdot))}$ is weakly continuous by
Lebesgue's dominated convergence theorem. The weak extension $\overline
{D_{x}\Phi(t,\phi(\cdot))}$ is unique because for any $\upsilon\in R,$ the
function $\upsilon1_{\{t\}}$ can be approximated weakly by a sequence of
continuous functions $\{\xi_{0}^{k}\},$ where%

\[
\xi_{0}^{k}(s):=\left\{
\begin{array}
[c]{c}%
(ks+1)\upsilon,-\frac{1}{k}+t\leq s\leq t\\
\text{ \ \ \ }0,\text{ \ \ \ }0\leq s<-\frac{1}{k}+t.
\end{array}
\right.
\]

Similarly, we can construct a unique weakly continuous bilinear extension
$\overline{D_{xx}^{2}\Phi(t,\phi(\cdot))}\in(C\oplus B_{t})^{\dagger}$ for any
continuous bilinear form $D_{xx}^{2}\Phi(t,\phi(\cdot)).$
\end{proof}

\section{The relation between Dupire derivatives and Fr\'{e}chet derivatives}

In order to establish the relation between Dupire derivatives and Fr\'{e}chet
derivatives, we need the following auxiliary stochastic functional
differential equation: for given $t\in\lbrack0,T)$ and $\gamma(\cdot
)\in\Lambda_{T}$,
\begin{align}
&  dX^{\gamma_{t}}(s)=b(s,X^{\gamma_{t}}(\cdot))ds+\sigma(s,X^{\gamma_{t}%
}(\cdot))dW(s),\text{ \ }s\in\lbrack t,T],\label{SDE_1}\\
&  X^{\gamma_{t}}(r)=\gamma_{t}(r),\quad\ \ r\in\lbrack0,t],\nonumber
\end{align}
where $\{W(s),s\in\lbrack0,T]\}$ is the $d$-dimensional standard Brownian
motion; the process $\{X^{\gamma_{t}}(s),0\leq s\leq T\}$ takes values in
$\mathbb{R}^{n}$; $b:[0,T]\times{C}\rightarrow\mathbb{R}^{n}$ and$\ \sigma
:[0,T]\times{C}\rightarrow\mathbb{R}^{n}\times\mathbb{R}^{d}$ are
non-anticipative functionals.

\begin{definition}
A process $\{X^{\gamma_{t}}(s),$ $s\in\lbrack t,T]\}$ is said to be a strong
solution of the equation (\ref{SDE_1}) on the interval $[t,T]$ and through the
initial datum $\gamma_{t}\in{\Lambda}$ if it satisfies the following conditions:
\end{definition}

\noindent(1) $X_{t}^{\gamma_{t}}=\gamma_{t}$;\newline\ \ (2) $X^{\gamma_{t}%
}(s)$ is $\mathcal{F}(s)$-measurable for each $s\in\lbrack t,T]$%
;\newline\ \ (3) The process $\{X^{\gamma_{t}}(s),s\in\lbrack t,T]\}$ is
continuous and it satisfies the following stochastic integral equation
$P-a.s.$
\[
X^{\gamma_{t}}(s)=\gamma_{t}(t)+\int_{t}^{s}b(r,X^{\gamma_{t}}(\cdot
))dr+\int_{t}^{s}\sigma(r,X^{\gamma_{t}}(\cdot))dW(r).
\]

We assume $b,\sigma$ satisfy the following Lipschitz and bounded conditions.

\begin{assumption}
\label{assu-1} $b(\cdot,x(\cdot)),\sigma(\cdot,x(\cdot))$ are progressively
measurable processes for each $x(\cdot)\in{C}$,\ and there exists a constant
$c>0$ such that
\[
\mid b(s,x^{1}(\cdot))-b(s,x^{2}(\cdot))\mid+\mid\sigma(s,x^{1}(\cdot
))-\sigma(s,x^{2}(\cdot))\mid\leq c\parallel x_{s}^{1}(\cdot)-x_{s}^{2}%
(\cdot)\parallel,
\]
$\forall(s,x^{1}(\cdot)),(s,x^{2}(\cdot))\in\lbrack0,T]\times{C}$.
\end{assumption}

\begin{assumption}
\label{assu-2}There exists a constant $K>0$ such that
\[
\mid b(s,\Phi(\cdot))\mid+\mid\sigma(s,\Phi(\cdot))\mid\leq K,\quad
\forall(s,\Phi(\cdot))\in\lbrack0,T]\times{C}.
\]

\end{assumption}

Then we have the following theorem (see \cite{Lipster}):

\begin{theorem}
Under assumptions (\ref{assu-1}) and (\ref{assu-2}), the equation
(\ref{SDE_1}) has a unique strong solution.
\end{theorem}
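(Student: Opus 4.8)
The plan is to prove existence and uniqueness of the strong solution to the stochastic functional differential equation (\ref{SDE_1}) by the standard Picard iteration (method of successive approximations), adapted to the path-dependent setting. Since the coefficients $b,\sigma$ depend on the whole trajectory $x(\cdot)$ up to the current time (through the non-anticipative structure and the norm $\parallel x_s^1(\cdot)-x_s^2(\cdot)\parallel$), the natural space to work in is the space of $\mathbb{R}^n$-valued, continuous, $(\mathcal{F}_s)$-adapted processes on $[t,T]$ agreeing with $\gamma_t$ on $[0,t]$, equipped with the norm $\|X\|_{\beta}^2 := \mathbb{E}\sup_{t\leq s\leq T} e^{-\beta s}|X(s)|^2$ for a suitably large constant $\beta$. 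This weighted norm is the device that turns the contraction estimate into a genuine contraction over the whole interval, avoiding the need to patch together solutions on short subintervals.

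First I would set up the iteration: define $X^0(s)\equiv\gamma_t(t)$ for $s\in[t,T]$ (extended by $\gamma_t$ on $[0,t]$), and recursively
\[
X^{m+1}(s)=\gamma_t(t)+\int_t^s b(r,X^m(\cdot))\,dr+\int_t^s \sigma(r,X^m(\cdot))\,dW(r).
\]
Each $X^m$ is continuous and adapted by construction, and Assumption \ref{assu-2} together with the Burkholder--Davis--Gundy (or Doob) inequality gives an a priori bound showing $\mathbb{E}\sup_{s}|X^{m+1}(s)|^2<\infty$, so the iterates stay in the chosen space. Next I would estimate the difference $X^{m+1}-X^m$: applying the Cauchy--Schwarz inequality to the drift term, the BDG inequality to the diffusion term, and then the Lipschitz Assumption \ref{assu-1}, one obtains
\[
\mathbb{E}\sup_{t\leq s\leq u}|X^{m+1}(s)-X^m(s)|^2 \leq C\int_t^u \mathbb{E}\parallel X^m_r-X^{m-1}_r\parallel^2\,dr.
\]
Because the path-norm $\parallel X_r - Y_r\parallel$ is controlled by $\sup_{t\leq s\leq r}|X(s)-Y(s)|$ (the paths already coincide on $[0,t]$), this closes into a recursion of Gronwall type. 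Iterating it yields the familiar factorial decay $\mathbb{E}\sup_s|X^{m+1}(s)-X^m(s)|^2 \leq (C(T-t))^m/m!\cdot(\text{const})$, whence $\{X^m\}$ is Cauchy and converges uniformly (in the $L^2$-sup sense) to a limit $X$; passing to the limit in the iteration, using the Lipschitz continuity of $b,\sigma$ to take limits inside the integrals, shows $X$ solves the integral equation and satisfies conditions (1)--(3) of the definition.

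For uniqueness, I would take two solutions $X$ and $Y$ with the same initial datum $\gamma_t$, form the same difference estimate, and apply Gronwall's lemma directly to conclude $\mathbb{E}\sup_s|X(s)-Y(s)|^2=0$, hence $X=Y$ up to indistinguishability. The main obstacle I anticipate is purely in the path-dependence: one must verify carefully that the Lipschitz bound in Assumption \ref{assu-1}, stated in terms of the supremum norm $\parallel x_s^1(\cdot)-x_s^2(\cdot)\parallel$ over the \emph{entire} history $[0,s]$, can be dominated by the running supremum of the difference over $[t,s]$ (so that the Gronwall argument closes), and that measurability and the non-anticipative property are preserved under the iteration and the limit. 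These are the technical points where the functional setting departs from the classical SDE proof, though the overall structure is identical; since the theorem is cited from Liptser--Shiryaev \cite{Lipster}, I would also simply appeal to that reference for the routine estimates and focus the argument on the adaptation to the non-anticipative coefficients.
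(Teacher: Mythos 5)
Your proposal is correct and matches the paper's approach: the paper gives no proof of this theorem at all, simply citing Liptser--Shiryaev \cite{Lipster}, and the successive-approximation argument you sketch (Picard iterates, Cauchy--Schwarz/BDG estimates, factorial decay, Gronwall for uniqueness) is precisely the classical proof carried out in that reference for coefficients that are non-anticipative functionals of the whole past under exactly these Lipschitz and boundedness assumptions. Your closing observation --- that the only genuinely path-dependent point is that $\parallel X_r-Y_r\parallel$ reduces to $\sup_{t\leq s\leq r}|X(s)-Y(s)|$ because the paths coincide with $\gamma_t$ on $[0,t]$ --- is indeed the one adaptation needed, so nothing essential is missing.
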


By similar analysis as in {\normalsize Mohammed} \cite{Mohammed1} and
\cite{Mohammed}, we have the following result.

\begin{theorem}
\label{Tto-1}Let Assumptions (\ref{assu-1}) and (\ref{assu-2}) hold true.
$X^{\gamma_{t}}(\cdot)$ is the solution of (\ref{SDE_1}). Suppose a
non-anticipative $\Phi$ belongs to $C^{1,2}([0,T)\times C)$. Then for given
$\gamma\in C,$%
\begin{equation}%
\begin{array}
[c]{rl}%
\lim_{\varepsilon\rightarrow0^{+}}\frac{E[\Phi(t+\varepsilon,X^{\gamma_{t}%
}(\cdot))]-\Phi(t,\gamma(\cdot))}{\varepsilon}= & D_{t}\Phi(t,\gamma
(\cdot))+\overline{D_{x}\Phi(t,\gamma(\cdot))}(b(t,\gamma(\cdot))1_{\{t\}})\\
& +\frac{1}{2}\sum\limits_{j=1}^{n}\overline{D_{xx}^{2}\Phi(t,\gamma(\cdot
))}(\sigma(t,\gamma(\cdot))e_{j}1_{\{t\}},\sigma(t,\gamma(\cdot))e_{j}%
1_{\{t\}}),
\end{array}
\label{Ito-1}%
\end{equation}

\end{theorem}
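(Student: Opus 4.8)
The plan is to prove \eqref{Ito-1} by computing the limit through a Taylor-type expansion of $\Phi$ along the solution path, using the two kinds of differentiability that the hypothesis $\Phi\in C^{1,2}([0,T)\times C)$ provides. First I would split the increment $E[\Phi(t+\varepsilon,X^{\gamma_t}(\cdot))]-\Phi(t,\gamma(\cdot))$ into a purely temporal piece and a spatial (path-perturbation) piece. Concretely, I introduce the intermediate functional value obtained by freezing the path after time $t$: writing $\gamma_t(\cdot)$ for the extension of $\gamma$ that is constant on $(t,T]$, I decompose
\[
\Phi(t+\varepsilon,X^{\gamma_t}(\cdot))-\Phi(t,\gamma(\cdot))
=\bigl[\Phi(t+\varepsilon,\gamma_t(\cdot))-\Phi(t,\gamma(\cdot))\bigr]
+\bigl[\Phi(t+\varepsilon,X^{\gamma_t}(\cdot))-\Phi(t+\varepsilon,\gamma_t(\cdot))\bigr].
\]
The first bracket, after dividing by $\varepsilon$ and letting $\varepsilon\to0^+$, yields $D_t\Phi(t,\gamma(\cdot))$ directly by Definition \ref{defi-1}, since $\gamma_t(\cdot)$ is exactly the horizontally-frozen path that appears there. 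So the real content is the second bracket.

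For the second bracket I would apply the second-order Fr\'{e}chet expansion of $\Phi$ in its path argument, holding time fixed at $t+\varepsilon$, around the frozen path $\gamma_t(\cdot)$, with increment $\eta^\varepsilon(\cdot):=X^{\gamma_t}(\cdot)-\gamma_t(\cdot)$. This gives
\[
\Phi(t+\varepsilon,X^{\gamma_t}(\cdot))-\Phi(t+\varepsilon,\gamma_t(\cdot))
=D_x\Phi(t+\varepsilon,\gamma_t(\cdot))(\eta^\varepsilon)
+\tfrac12 D_{xx}^2\Phi(t+\varepsilon,\gamma_t(\cdot))(\eta^\varepsilon,\eta^\varepsilon)
+o(\|\eta^\varepsilon 1_{[0,t+\varepsilon]}\|^2).
\]
The key point is that $\eta^\varepsilon$ vanishes on $[0,t]$ and, on $[t,t+\varepsilon]$, the SDE \eqref{SDE_1} gives $\eta^\varepsilon(s)=\int_t^s b\,dr+\int_t^s \sigma\,dW(r)$. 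Taking expectations kills the stochastic-integral contribution in the linear term to leading order, leaving the drift: $E[D_x\Phi\,(\eta^\varepsilon)]\approx \overline{D_x\Phi}\,(b(t,\gamma)1_{\{t\}})\,\varepsilon$ after one uses that $\eta^\varepsilon$ is asymptotically concentrated near the endpoint $s=t$ and therefore, as $\varepsilon\to0$, acts against $D_x\Phi$ like the Dirac-type functional $\upsilon 1_{\{t\}}$ with $\upsilon=b(t,\gamma)$. This is precisely where the \emph{weakly continuous extension} of Lemma \ref{extension-1} is indispensable: the limiting increment is not continuous, it lives in $C\oplus B_t$, so only $\overline{D_x\Phi}$ and $\overline{D_{xx}^2\Phi}$ can be evaluated on it. For the quadratic term, $E[D_{xx}^2\Phi(\eta^\varepsilon,\eta^\varepsilon)]$ produces the It\^{o}-type quadratic-variation contribution; component-wise in the Brownian directions $e_j$, the quadratic variation of the $j$-th diffusion piece over $[t,t+\varepsilon]$ is $|\sigma(t,\gamma)e_j|^2\varepsilon+o(\varepsilon)$, which after division by $\varepsilon$ and passage to the limit gives the $\tfrac12\sum_j \overline{D_{xx}^2\Phi}(\sigma e_j 1_{\{t\}},\sigma e_j 1_{\{t\}})$ term.

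The steps I would carry out, in order, are: (i) establish the standard moment estimate $E\!\left[\sup_{t\le s\le t+\varepsilon}|\eta^\varepsilon(s)|^p\right]=O(\varepsilon^{p/2})$ from Assumptions \ref{assu-1}–\ref{assu-2}, which controls all remainder terms; (ii) handle the linear term by writing $\eta^\varepsilon=\int_t^\cdot b\,dr+\int_t^\cdot\sigma\,dW$, applying the linear extension $\overline{D_x\Phi}$, taking expectation (so the martingale part contributes zero), and showing the drift part converges to $\overline{D_x\Phi}(b(t,\gamma)1_{\{t\}})$ using continuity of the coefficients and the weak convergence of the normalized increment to $b(t,\gamma)1_{\{t\}}$; (iii) handle the quadratic term by the It\^{o} isometry / quadratic-variation identity, using continuity of $\sigma$ and of $D_{xx}^2\Phi$ in its first (time) argument to replace $\sigma(r,\cdot)$ and $D_{xx}^2\Phi(t+\varepsilon,\cdot)$ by their values at $(t,\gamma)$ up to $o(\varepsilon)$; and (iv) collect the remainders and show they are $o(\varepsilon)$. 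I expect the main obstacle to be step (ii)–(iii): rigorously justifying that the rescaled path increment $\eta^\varepsilon$ converges \emph{weakly} (bounded-pointwise) to a multiple of $1_{\{t\}}$, so that the extensions built in Lemma \ref{extension-1} apply and the limit can be identified with the localized functionals evaluated at $b(t,\gamma)1_{\{t\}}$ and $\sigma(t,\gamma)e_j1_{\{t\}}$. This requires carefully controlling the interplay between the supremum-norm remainder $o(\|\eta^\varepsilon\|^2)$ and the $O(\varepsilon)$ scale of the leading terms, ensuring the pathwise remainder does not survive after dividing by $\varepsilon$ and taking expectation.
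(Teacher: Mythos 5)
Your proposal is correct and follows essentially the same route as the paper's own proof: the identical decomposition into the horizontally-frozen temporal increment plus a spatial Fr\'{e}chet--Taylor expansion around $\gamma_t(\cdot)$, the same identification of the bounded-pointwise (weak) limit of the normalized increment with $b(t,\gamma)1_{\{t\}}$ via the extensions of Lemma \ref{extension-1}, and the same quadratic-variation computation for the second-order term. The only technical deviation is that the paper writes the second-order remainder in integral form, $R(\varepsilon)=\int_0^1(1-u)D_{xx}^2\Phi\bigl(t+\varepsilon,\gamma_t(\cdot)+u(X^{\gamma_t}(\cdot)-\gamma_t(\cdot))\bigr)(\eta^\varepsilon,\eta^\varepsilon)\,du$, rather than as a pointwise $o(\|\eta^\varepsilon\|^2)$ term, which sidesteps exactly the uniformity-of-the-little-$o$ issue along random paths that you flag at the end of your plan.
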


\begin{proof}
Step 1.

Fix $\gamma(\cdot)\in C.$ Since $\Phi\in C^{1,2}([0,T]\times C),$ by Taylor's
theorem, for $\varepsilon>0$,
\[%
\begin{array}
[c]{rl}%
\Phi(t+\varepsilon,X^{\gamma_{t}}(\cdot))-\Phi(t,\gamma(\cdot))= &
\Phi(t+\varepsilon,\gamma_{t}(\cdot))-\Phi(t,\gamma(\cdot))+\Phi
(t+\varepsilon,X^{\gamma_{t}}(\cdot))-\Phi(t+\varepsilon,\gamma_{t}(\cdot))\\
= & D_{t}\Phi(t,\gamma(\cdot))\cdot\varepsilon+D_{x}\Phi(t+\varepsilon
,\gamma_{t}(\cdot))((X^{\gamma_{t}}(\cdot)-\gamma_{t}(\cdot
))1_{[0,t+\varepsilon]})\\
& +R(\varepsilon)+o(\varepsilon),\;a.s.
\end{array}
\]
where%
\[%
\begin{array}
[c]{cl}%
R(\varepsilon):= & \int_{0}^{1}(1-u)D_{xx}^{2}\Phi(t+\varepsilon,\gamma
_{t}(\cdot)+u\cdot(X^{\gamma_{t}}(\cdot)-\gamma_{t}(\cdot)))\\
& \text{ \ }((X^{\gamma_{t}}(\cdot)-\gamma_{t}(\cdot))1_{[0,t+\varepsilon
]},(X^{\gamma_{t}}(\cdot)-\gamma_{t}(\cdot))1_{[0,t+\varepsilon]})du.
\end{array}
\]

Taking expectation and dividing by $\varepsilon$, we have%

\begin{equation}%
\begin{array}
[c]{cl}%
\frac{E[\Phi(t+\varepsilon,X^{\gamma_{t}}(\cdot))]-\Phi(t,\gamma(\cdot
))}{\varepsilon}= & D_{t}\Phi(t,\gamma(\cdot))+D_{x}\Phi(t+\varepsilon
,\gamma_{t}(\cdot))\cdot E[\frac{1}{\varepsilon}(X^{\gamma_{t}}(\cdot
)-\gamma_{t}(\cdot))1_{[0,t+\varepsilon]}]\\
& +\frac{1}{\varepsilon}ER(t)+o(1).
\end{array}
\label{Ito-2}%
\end{equation}
Note that%

\[%
\begin{array}
[c]{rl}%
\lim_{\varepsilon\rightarrow0^{+}}[E\{ \frac{1}{\varepsilon}(X^{\gamma_{t}%
}(\cdot)-\gamma_{t}(\cdot))1_{[0,t+\varepsilon]}\}](s)= & \left\{
\begin{array}
[c]{l}%
\lim_{\varepsilon\rightarrow0^{+}}\frac{1}{\varepsilon}\int_{t}^{t+\varepsilon
}E[b(u,X^{\gamma_{t}}(\cdot))]du,\text{ }s=t\\
0,\text{ \ \ }0\leq s<t
\end{array}
\right. \\
= & b(t,\gamma(\cdot))1_{\{t\}},\text{ \ \ \ \ \ }0\leq s\leq t.
\end{array}
\]
Since $b$ is bounded, $\Vert E\{ \frac{1}{\varepsilon}(X^{\gamma_{t}}%
(\cdot)-\gamma_{t}(\cdot))1_{[0,t+\varepsilon]}\} \Vert_{C}$ is bounded at $t$
and $\gamma_{t}(\cdot)\in C.$ Hence%

\[%
\begin{array}
[c]{c}%
\lim_{\varepsilon\rightarrow0^{+}}[E\{ \frac{1}{\varepsilon}(X^{\gamma_{t}%
}(\cdot)-\gamma_{t}(\cdot))1_{[0,t+\varepsilon]}\}]=b(t,\gamma(\cdot
))1_{\{t\}}.
\end{array}
\]
Therefore, by Lemma (\ref{extension-1}) and the continuity of $D_{x}\Phi$ at
$\gamma(\cdot),$ we obtain%

\[%
\begin{array}
[c]{rl}
& \lim_{\varepsilon\rightarrow0^{+}}D_{x}\Phi(t+\varepsilon,\gamma_{t}%
(\cdot))[E\{ \frac{1}{\varepsilon}(X^{\gamma_{t}}(\cdot)-\gamma_{t}%
(\cdot))1_{[0,t+\varepsilon]}\}]\\
= & \lim_{\varepsilon\rightarrow0^{+}}D_{x}\Phi(t,\gamma(\cdot))[E\{ \frac
{1}{\varepsilon}(X^{\gamma_{t}}(\cdot)-\gamma_{t}(\cdot))1_{[0,t+\varepsilon
]}\}]\\
= & \overline{D_{x}\Phi(t,\gamma(\cdot))}(b(t,\gamma(\cdot))1_{\{t\}}).
\end{array}
\]

\noindent Step 2.

Finally we calculus the limit of the third term in the right-hand side of
(\ref{Ito-2}) as $\varepsilon\rightarrow0^{+}$. By the martingale property of
the It\^{o} integral and the Lipschitz continuity of $D_{xx}^{2}\Phi$, we have
the following estimates:%
\[%
\begin{array}
[c]{ll}
& \mid\frac{1}{\varepsilon}ED_{x}^{2}\Phi(t+\varepsilon,\gamma_{t}%
(\cdot)+u\cdot(X^{\gamma_{t}}(\cdot)-\gamma_{t}(\cdot)))((X^{\gamma_{t}}%
(\cdot)-\gamma_{t}(\cdot))1_{[0,t+\varepsilon]},(X^{\gamma_{t}}(\cdot
)-\gamma_{t}(\cdot))1_{[0,t+\varepsilon]})\\
& -\frac{1}{\varepsilon}ED_{xx}^{2}\Phi(t,\gamma(\cdot))((X^{\gamma_{t}}%
(\cdot)-\gamma_{t}(\cdot))1_{[0,t+\varepsilon]},(X^{\gamma_{t}}(\cdot
)-\gamma_{t}(\cdot))1_{[0,t+\varepsilon]})\mid\\
\leq & (E\Vert D_{xx}^{2}\Phi(t+\varepsilon,\gamma_{t}(\cdot)+u\cdot
(X^{\gamma_{t}}(\cdot)-\gamma_{t}(\cdot)))-D_{xx}^{2}\Phi(t,\gamma
(\cdot))\Vert^{2})^{\frac{1}{2}}[\frac{1}{\varepsilon^{2}}E\Vert(X^{\gamma
_{t}}(\cdot)-\gamma_{t}(\cdot))1_{[0,t+\varepsilon]}\Vert^{4}]^{\frac{1}{2}}\\
\leq & K(\varepsilon^{2}+1)^{\frac{1}{2}}(E\Vert D_{xx}^{2}\Phi(t+\varepsilon
,\gamma_{t}(\cdot)+u\cdot(X^{\gamma_{t}}(\cdot)-\gamma_{t}(\cdot)))-D_{xx}%
^{2}\Phi(t,\gamma(\cdot))\Vert^{2})^{\frac{1}{2}},
\end{array}
\]
where $t\in R^{+}$, $\gamma(\cdot)\in C$ and $K$ is a positive constant
independent of $u$. The last line tends to $0,$ uniformly for $u\in
\lbrack0,1],$ as $\varepsilon\rightarrow0^{+}$. Because $\Phi\in
C^{1,2}([0,T)\times C)$ and is bounded on $C$, we have the following weak
limit:
\[%
\begin{array}
[c]{rl}%
\lim_{\varepsilon\rightarrow0^{+}}\frac{1}{\varepsilon}ER(\varepsilon)= &
\int_{0}^{1}(1-u)\lim_{\varepsilon\rightarrow0^{+}}\frac{1}{\varepsilon
}ED_{xx}^{2}\Phi(t,\gamma(\cdot))((X^{\gamma_{t}}(\cdot)-\gamma_{t}%
(\cdot))1_{[0,t+\varepsilon]},(X^{\gamma_{t}}(\cdot)-\gamma_{t}(\cdot
))1_{[0,t+\varepsilon]})\\
= & \frac{1}{2}\sum\limits_{j=1}^{n}\overline{D_{xx}^{2}\Phi(t,\gamma(\cdot
))}(\sigma(t,\gamma(\cdot))e_{j}1_{\{t\}},\sigma(t,\gamma(\cdot))e_{j}%
1_{\{t\}}).
\end{array}
\]

\end{proof}

Note that $b$ and $\sigma$\ are non-anticipative functionals. We can rewrite
$b(t,\gamma(\cdot))=\tilde{b}(\gamma_{t})$ and $\sigma(t,\gamma(\cdot
))=\tilde{\sigma}(\gamma_{t})$, $\forall$ $\gamma(\cdot)\in C$.

\begin{corollary}
\label{w2} Let Assumptions (\ref{assu-1}) and (\ref{assu-2}) hold true.
$X^{\gamma_{t}}(\cdot)$ is the solution of (\ref{SDE_1}). $\Phi$ in
$\mathbb{C}^{1,2}(\Lambda)$ is non-anticipative. Then for any $t\in
\lbrack0,T)$,
\end{corollary}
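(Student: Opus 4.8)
The plan is to evaluate the single quantity
\[
L:=\lim_{\varepsilon\to0^{+}}\frac{E[\Phi(t+\varepsilon,X^{\gamma_{t}}(\cdot))]-\Phi(t,\gamma(\cdot))}{\varepsilon}
\]
in two independent ways and then compare. Throughout I identify the Dupire functional $\Phi(\gamma_{t})$ on continuous paths with the non-anticipative functional $\Phi(t,\gamma(\cdot))$; this is legitimate because non-anticipativity makes $\Phi(t,\gamma(\cdot))$ depend on $\gamma$ only through $\gamma_{t}$. The first evaluation produces $L$ in terms of the Dupire derivatives, while the second is precisely the content of Theorem \ref{Tto-1} in terms of the extended Fréchet derivatives.

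First I would apply the functional It\^{o} formula (Theorem \ref{w2 copy(1)}) to the continuous semimartingale $X^{\gamma_{t}}$ on $[t,t+\varepsilon]$, obtaining
\[
\Phi(X^{\gamma_{t}}_{t+\varepsilon})-\Phi(\gamma_{t})=\int_{t}^{t+\varepsilon}\tilde D_{s}\Phi(X^{\gamma_{t}}_{s})\,ds+\int_{t}^{t+\varepsilon}\tilde D_{x}\Phi(X^{\gamma_{t}}_{s})\,dX(s)+\tfrac12\int_{t}^{t+\varepsilon}\tilde D^{2}_{xx}\Phi(X^{\gamma_{t}}_{s})\,d\langle X\rangle(s).
\]
Substituting $dX=b\,ds+\sigma\,dW$ and $d\langle X\rangle=\tilde\sigma\tilde\sigma^{\top}\,ds$, taking expectation so the $dW$-martingale term drops, dividing by $\varepsilon$ and letting $\varepsilon\to0^{+}$, the Dupire side gives
\[
L=\tilde D_{t}\Phi(\gamma_{t})+\langle\tilde D_{x}\Phi(\gamma_{t}),\tilde b(\gamma_{t})\rangle+\tfrac12\sum_{j}\langle\tilde D^{2}_{xx}\Phi(\gamma_{t})\tilde\sigma(\gamma_{t})e_{j},\tilde\sigma(\gamma_{t})e_{j}\rangle.
\]
The passage of the limit through the time averages is where the analytic work lies: one uses the a.s.\ continuity of $X^{\gamma_{t}}$ together with the estimate $E\Vert X^{\gamma_{t}}_{t+\varepsilon}-\gamma_{t}\Vert\to0$, which follows from the boundedness in Assumption \ref{assu-2}, so that $d_{\infty}(X^{\gamma_{t}}_{s},\gamma_{t})\to0$; then the $\Lambda$-continuity of $\tilde D_{t}\Phi,\tilde D_{x}\Phi,\tilde D^{2}_{xx}\Phi$ together with dominated convergence lets one replace each integrand by its value at $\gamma_{t}$.

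Then I would equate this Dupire-side expression with the right-hand side of Theorem \ref{Tto-1}, namely $D_{t}\Phi(t,\gamma)+\overline{D_{x}\Phi(t,\gamma)}(b\,1_{\{t\}})+\tfrac12\sum_{j}\overline{D^{2}_{xx}\Phi(t,\gamma)}(\sigma e_{j}1_{\{t\}},\sigma e_{j}1_{\{t\}})$, and peel off the three derivatives by exploiting the freedom in the coefficients (constant $b,\sigma$ are admissible under Assumptions \ref{assu-1} and \ref{assu-2}). Choosing $b\equiv0,\sigma\equiv0$, so that $X^{\gamma_{t}}$ is the flat extension $\gamma_{t,\cdot}$, forces $\tilde D_{t}\Phi(\gamma_{t})=D_{t}\Phi(t,\gamma)$; choosing $\sigma\equiv0$ with $b$ an arbitrary constant $\beta$ then forces $\langle\tilde D_{x}\Phi(\gamma_{t}),\beta\rangle=\overline{D_{x}\Phi(t,\gamma)}(\beta1_{\{t\}})$ for every $\beta\in\mathbb R^{n}$; finally, with these two identities in hand, choosing $b\equiv0$ and $\sigma$ an arbitrary constant matrix and using polarization forces $\langle\tilde D^{2}_{xx}\Phi(\gamma_{t})ae_{j},ae_{j}\rangle=\overline{D^{2}_{xx}\Phi(t,\gamma)}(ae_{j}1_{\{t\}},ae_{j}1_{\{t\}})$. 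These three equalities are exactly the asserted coherence of the Dupire and extended Fréchet derivatives on continuous paths.

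I expect the main obstacle to be twofold. The delicate analytic point is the limit interchange in the first step, which requires uniform control of $\varepsilon^{-2}E\Vert(X^{\gamma_{t}}-\gamma_{t})1_{[0,t+\varepsilon]}\Vert^{4}$ and $\Lambda$-continuity to localize the integrands at $\gamma_{t}$ — the same mechanism underlying Theorem \ref{Tto-1}. The more structural point is the compatibility of the two regularity classes: invoking both Theorem \ref{w2 copy(1)} and Theorem \ref{Tto-1} tacitly requires that an element of $\mathbb{C}^{1,2}(\Lambda)$ restrict on continuous paths to an element of $C^{1,2}([0,T)\times C)$, so that the weakly continuous extensions $\overline{D_{x}\Phi},\overline{D^{2}_{xx}\Phi}$ furnished by Lemma \ref{extension-1} are available. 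I would therefore verify (or incorporate into the hypotheses) that the horizontal and vertical smoothness transfers to Fréchet smoothness before applying Theorem \ref{Tto-1}.
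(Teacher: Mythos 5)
Your first evaluation of $L$ --- applying the functional It\^{o} formula of Theorem \ref{w2 copy(1)} to the continuous semimartingale $X^{\gamma_{t}}$ on $[t,t+\varepsilon]$, substituting the dynamics, taking expectations so the stochastic-integral term vanishes, dividing by $\varepsilon$, and localizing the integrands at $\gamma_{t}$ via the $\Lambda$-continuity of $\tilde{D}_{t}\Phi,\tilde{D}_{x}\Phi,\tilde{D}_{xx}^{2}\Phi$ and $d_{\infty}(X_{s}^{\gamma_{t}},\gamma_{t})\rightarrow0$ --- is precisely the paper's (one-line) proof of Corollary \ref{w2}, and it is correct. Note, though, that the corollary asserts only this Dupire-side identity (\ref{Ito-3}): everything after, namely equating with Theorem \ref{Tto-1} and peeling off the three derivatives by varying the admissible coefficients $b$ and $\sigma$, is not part of this statement but reproduces the paper's proof of the subsequent Theorem \ref{w3}, so you have proved more than was asked.
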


\begin{equation}%
\begin{array}
[c]{rl}%
\lim_{\varepsilon\rightarrow0^{+}}\frac{E[\Phi(X_{t+\varepsilon}^{\gamma_{t}%
})]-\Phi(\gamma_{t})}{\varepsilon}= & \tilde{D}_{t}\Phi(\gamma_{t}%
)+\langle\tilde{D}_{x}\Phi(\gamma_{t}),\tilde{b}(\gamma_{t})\rangle+\frac
{1}{2}\langle\tilde{D}_{xx}\Phi(\gamma_{t})\tilde{\sigma}(\gamma_{t}%
),\tilde{\sigma}(\gamma_{t})\rangle.
\end{array}
\label{Ito-3}%
\end{equation}

It is easy to prove this corollary by Theorem \ref{w2 copy(1)}.

Now we build the relation between Fr\'{e}chet derivatives and Duprie derivatives.

\begin{theorem}
\label{w3}Suppose (i) $\Phi\in\mathbb{C}^{1,2}(\Lambda)$. (ii) When the domain
of $\Phi$ is limited to $[0,T]\times C$, it is non-anticipative and belongs to
$C^{1,2}([0,T)\times C)$. Then, for any given $\gamma(\cdot)\in C,$ we have
the following equalities:%
\[%
\begin{array}
[c]{rl}%
\tilde{D}_{t}\tilde{\Phi}(\gamma_{t})= & D_{t}\tilde{\Phi}(\gamma_{t}),\\
\tilde{\mu}(t)= & \tilde{D}_{x}\tilde{\Phi}(\gamma_{t}),\\
\tilde{\lambda}(t)= & \tilde{D}_{xx}^{2}\tilde{\Phi}(\gamma_{t}),
\end{array}
\]
where $\tilde{\Phi}(\gamma_{t})=\Phi(t,\gamma(\cdot))$, $\tilde{\mu}$\ and
$\tilde{\lambda}$ are the corresponding Borel measures of $\overline
{D_{x}\tilde{\Phi}(\gamma_{t})}$ and $\overline{D_{xx}^{2}\tilde{\Phi}%
(\gamma_{t})}$.
\end{theorem}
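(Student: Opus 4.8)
The plan is to read off the three equalities from a single scalar quantity that can be computed in two different ways, namely through the two infinitesimal-generator formulas \eqref{Ito-1} and \eqref{Ito-3}. The key observation is that both formulas compute the \emph{same} limit, so their right-hand sides must agree for \emph{every} admissible pair of coefficients $(b,\sigma)$; I then select coefficients that isolate each derivative in turn.

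First I would argue that the left-hand sides of \eqref{Ito-1} and \eqref{Ito-3} coincide. Under hypotheses (i)--(ii) we have $\tilde\Phi(\gamma_s)=\Phi(s,\gamma(\cdot))$, and the restriction of $\Phi$ to $[0,T]\times C$ is non-anticipative; since the strong solution $X^{\gamma_t}(\cdot)$ is continuous, evaluating $\Phi$ along it gives one and the same number whether written as $\Phi(t+\varepsilon,X^{\gamma_t}(\cdot))$ (the continuous path read in $C$) or as $\tilde\Phi(X^{\gamma_t}_{t+\varepsilon})$ (its restriction in $\Lambda_{t+\varepsilon}$), and likewise $\Phi(t,\gamma(\cdot))=\tilde\Phi(\gamma_t)$. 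Taking expectations, dividing by $\varepsilon$ and letting $\varepsilon\to0^+$, the right-hand sides of \eqref{Ito-1} and \eqref{Ito-3} are therefore equal. Next I would translate the extended-Fr\'echet terms via Lemma \ref{extension-1}: because $b(t,\gamma)1_{\{t\}}$ and $\sigma(t,\gamma)e_j1_{\{t\}}$ are atoms carried by $\{t\}$, the weakly continuous extensions reduce to $\overline{D_x\Phi(t,\gamma)}(b1_{\{t\}})=\langle b(t,\gamma),\tilde\mu(t)\rangle$ and $\overline{D_{xx}^2\Phi(t,\gamma)}(\sigma e_j1_{\{t\}},\sigma e_j1_{\{t\}})=\langle\tilde\lambda(t)\sigma e_j,\sigma e_j\rangle$, where $\tilde\mu(t),\tilde\lambda(t)$ are the masses the representing Borel measures put at $t$. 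Thus the equality of generators becomes, with all Fr\'echet objects evaluated at $(t,\gamma)$ and all Dupire objects at $\gamma_t$,
\[
\begin{aligned}
&D_t\Phi+\langle b,\tilde\mu(t)\rangle+\tfrac12\sum_{j=1}^{n}\langle\tilde\lambda(t)\sigma e_j,\sigma e_j\rangle\\
&\qquad=\tilde D_t\Phi+\langle\tilde D_x\Phi,\tilde b\rangle+\tfrac12\sum_{j=1}^{n}\langle\tilde D_{xx}^2\Phi\,\tilde\sigma e_j,\tilde\sigma e_j\rangle .
\end{aligned}
\]

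I would then probe this identity with constant coefficients, which satisfy Assumptions \ref{assu-1}--\ref{assu-2} (Lipschitz constant $0$, trivially bounded) and may be chosen freely for each fixed $(t,\gamma)$. Taking $b\equiv0,\ \sigma\equiv0$ makes $X^{\gamma_t}$ the flat horizontal extension and collapses both sides to the time-derivative terms, giving $\tilde D_t\tilde\Phi(\gamma_t)=D_t\tilde\Phi(\gamma_t)$. With this in hand, $\sigma\equiv0$ and $b\equiv v$ an arbitrary constant vector yield $\langle v,\tilde\mu(t)\rangle=\langle\tilde D_x\tilde\Phi(\gamma_t),v\rangle$ for all $v\in\mathbb{R}^n$, hence $\tilde\mu(t)=\tilde D_x\tilde\Phi(\gamma_t)$. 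Finally $b\equiv0$ and $\sigma\equiv A$, an arbitrary constant $n\times d$ matrix, give $\sum_j\langle\tilde\lambda(t)Ae_j,Ae_j\rangle=\sum_j\langle\tilde D_{xx}^2\tilde\Phi(\gamma_t)Ae_j,Ae_j\rangle$; specializing $A$ to a single nonzero column reduces this to $\langle\tilde\lambda(t)v,v\rangle=\langle\tilde D_{xx}^2\tilde\Phi(\gamma_t)v,v\rangle$ for all $v$, and since both matrices are symmetric, polarization yields $\tilde\lambda(t)=\tilde D_{xx}^2\tilde\Phi(\gamma_t)$.

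The hard part is not this linear-algebra probing but the first step: being genuinely sure that the two It\^o formulas compute one and the same scalar. This rests on the identification of the restriction of the Dupire-regular functional with the non-anticipative Fr\'echet-regular functional evaluated along a common continuous solution, together with the precise meaning of the atom $\tilde\mu(t)=\mu(\{t\})$ delivered by Lemma \ref{extension-1} when the test direction is the indicator $1_{\{t\}}$. Once that bookkeeping is pinned down, the three equalities drop out simply by varying $(b,\sigma)$.
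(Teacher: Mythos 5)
Your proposal is correct and follows essentially the same route as the paper: compute the limit $\lim_{\varepsilon\to0^+}\varepsilon^{-1}\bigl(E[\Phi(t+\varepsilon,X^{\gamma_t}(\cdot))]-\Phi(t,\gamma(\cdot))\bigr)$ once via Theorem \ref{Tto-1} with the Riesz measures $\tilde\mu,\tilde\lambda$, once via Corollary \ref{w2}, and equate the two expressions using the arbitrariness of $b$ and $\sigma$ under Assumptions \ref{assu-1}--\ref{assu-2}. Your explicit probing with constant coefficients (and the symmetry/polarization step for the Hessian) is a careful spelling-out of the paper's closing remark that ``$b$ and $\sigma$ can take any values,'' not a different argument.
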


\begin{proof}
For given $\gamma(\cdot)\in C$, we rewrite $\tilde{\Phi}(X_{s}^{\gamma_{t}}%
)$=$\Phi(s,X^{\gamma_{t}}(\cdot)),\tilde{b}(\gamma_{t})=b(t,\gamma(\cdot))$
and $\tilde{\sigma}(\gamma_{t})=\sigma(t,\gamma(\cdot))$. By Theorem
(\ref{Tto-1}), we have%
\begin{equation}%
\begin{array}
[c]{rl}%
\lim_{\varepsilon\rightarrow0^{+}}\frac{E[\Phi(t+\varepsilon,X^{\gamma_{t}%
}(\cdot))]-\Phi(t,\gamma(\cdot))}{\varepsilon}= & \lim_{\varepsilon
\rightarrow0^{+}}\frac{E[\tilde{\Phi}(X_{t+\varepsilon}^{\gamma_{t}}%
)]-\tilde{\Phi}(\gamma_{t})}{\varepsilon}\\
= & \Phi(t,\gamma(\cdot))+\overline{D_{x}\Phi(t,\gamma(\cdot))}(b(t,\gamma
(\cdot))1_{\{t\}})\\
& +\frac{1}{2}\sum\limits_{j=1}^{n}\overline{D_{xx}^{2}\Phi(t,\gamma(\cdot
))}(\sigma(t,\gamma(\cdot))e_{j}1_{\{t\}},\sigma(t,\gamma(\cdot))e_{j}%
1_{\{t\}})\\
= & D_{t}\tilde{\Phi}(\gamma_{t})+\overline{D_{x}\tilde{\Phi}(\gamma_{t}%
)}(\tilde{b}(\gamma_{t})1_{\{t\}})\\
& +\frac{1}{2}\sum\limits_{j=1}^{n}\overline{D_{xx}^{2}\tilde{\Phi}(\gamma
_{t})}(\tilde{\sigma}(\gamma_{t})e_{j}1_{\{t\}},\tilde{\sigma}(\gamma
_{t})e_{j}1_{\{t\}}).
\end{array}
\label{Ito-4}%
\end{equation}

Similar as the proof of lemma (\ref{extension-1}), we know there is a unique
finite Borel measure $\tilde{\mu}$ on $[0,T]$ such that
\begin{equation}%
\begin{array}
[c]{c}%
D_{x}\tilde{\Phi}(\gamma_{t})(\eta(s))=\int_{0}^{t}\eta(s)d\tilde{\mu}(s).
\end{array}
\label{Rise-2}%
\end{equation}
Then we have%

\[
\overline{D_{x}\tilde{\Phi}(\gamma_{t})}(\tilde{b}(\gamma_{t})1_{\{t\}}%
)=\langle\tilde{\mu}(t),\tilde{b}(\gamma_{t})\rangle,
\]

There is also a unique finite Borel measure $\tilde{\lambda}$ on $[0,T]$ such that%

\[%
\begin{array}
[c]{c}%
\frac{1}{2}\langle\tilde{\lambda}(t)\tilde{\sigma}(\gamma_{t}),\tilde{\sigma
}(\gamma_{t})\rangle=\frac{1}{2}\sum\limits_{j=1}^{n}\overline{D_{xx}%
^{2}\tilde{\Phi}(\gamma_{t})}(\tilde{\sigma}(\gamma_{t})e_{j}1_{\{t\}}%
,\tilde{\sigma}(\gamma_{t})e_{j}1_{\{t\}}).
\end{array}
\]
It yields that%

\begin{equation}%
\begin{array}
[c]{rl}%
\lim_{\varepsilon\rightarrow0^{+}}\frac{E[\Phi(t+\varepsilon,X^{\gamma_{t}%
}(\cdot))]-\Phi(t,\gamma(\cdot))}{\varepsilon}= & \lim_{\varepsilon
\rightarrow0^{+}}\frac{E[\tilde{\Phi}(X_{t+\varepsilon}^{\gamma_{t}}%
)]-\tilde{\Phi}(\gamma_{t})}{\varepsilon}\\
= & D_{t}\tilde{\Phi}(\gamma_{t})+\langle\tilde{\mu}(t),\tilde{b}(\gamma
_{t})\rangle+\frac{1}{2}\langle\tilde{\lambda}(t)\tilde{\sigma}(\gamma
_{t}),\tilde{\sigma}(\gamma_{t})\rangle.
\end{array}
\label{Ito-5}%
\end{equation}
By Corollary (\ref{w2}), we have%
\begin{equation}%
\begin{array}
[c]{rl}%
\lim_{\varepsilon\rightarrow0^{+}}\frac{E[\Phi(X_{t+\varepsilon}^{\gamma_{t}%
})]-\Phi(\gamma_{t})}{\varepsilon}= & \tilde{D}_{t}\Phi(\gamma_{t}%
)+\langle\tilde{D}_{x}\Phi(\gamma_{t}),\tilde{b}(\gamma_{t})\rangle+\frac
{1}{2}\langle\tilde{D}_{xx}\Phi(\gamma_{t})\tilde{\sigma}(\gamma_{t}%
),\tilde{\sigma}(\gamma_{t})\rangle.
\end{array}
\label{Ito-6}%
\end{equation}

Notice that $b$ and $\sigma$ can take any values which satisfy Assumptions
(\ref{assu-1}) and (\ref{assu-2}). Comparing (\ref{Ito-5}) and (\ref{Ito-6}),
we obtain the results.
\end{proof}

\textbf{Acknowledgement.} \textit{The authors would like to thank Shige Peng
for pointing out that based on our results, Dupire derivative is a concept
weak than Fr\'{e}chet derivative. }

\bigskip

\end{document}